\newtheorem{thm}[subsection]{Theorem}
\newtheorem{prop}[subsection]{Proposition}
\newtheorem{cor}[subsection]{Corollary}
\theoremstyle{remark}
\newtheorem{defn}[subsection]{Definition}
\newtheorem{ex}[subsection]{Example}
\begin{document}

\title{Oscillation Revisited}

\author {Gerald Beer}
\address {Department of Mathematics, California State University Los Angeles,
5151 State University Drive, Los Angeles, California 90032, USA}
\email{gbeer@cslanet.calstatela.edu}

\author {Jiling Cao}
\address{School of Engineering, Computer and Mathematical Sciences,
	Auckland University of Technology, Private Bag 92006, Auckland 1142, New Zealand}
\email {jiling.cao@aut.ac.nz}

\dedicatory {D\'{e}di\'{e} \`{a}  Michel Th\'{e}ra pour son
soixante-dixi\`{e}me anniversaire}

 \thanks{The first author thanks the Auckland University of Technology for its
 hospitality in February 2016. The second author thanks the support
 of the National Natural Science Foundation of China, grant No. 11571158, and the
 paper was partially written when he visited Minnan Normal University in April 2016
 as Min Jiang Scholar Guest Professor.}

 \subjclass[2010] {Primary 54E40, 54B20, 26A15; Secondary 54E35, 54C35}
 \date {August 9, 2016} \keywords{Oscillation; Strong uniform continuity; UC-subset
 Hausdorff distance; Locally finite topology; Finite topology; Strong uniform
 convergence; Very strong uniform convergence; Bornology}

 \begin {abstract}
 In previous work by Beer and Levi \cite {BL1,BL2}, the authors studied the oscillation
 $\Omega (f,A)$ of a function $f$ between metric spaces $\langle X,d \rangle$ and
 $\langle Y,\rho \rangle$ at a nonempty subset $A$ of $X$, defined so that when
 $A =\{x\}$, we get $\Omega (f,\{x\}) = \omega (f,x)$, where $\omega (f,x)$ denotes
 the classical notion of oscillation of $f$ at the point $x \in X$.  The main purpose
 of this article is to formulate a general joint continuity result for $(f,A) \mapsto
 \Omega (f,A)$ valid for continuous functions.
 \end{abstract}

 \maketitle

 \section{Introduction} \label{sec: Intro}

 Let $\langle X,d \rangle$ and $\langle Y,\rho \rangle$ be two metric spaces, each
 with at least two points, and let  $S_d(x,\varepsilon)$ denote the open $d$-ball of
 radius $\varepsilon$ about $x \in X$.  Suppose that $f$ is a function from $X$ to
 $Y$ and $x \in X$ is arbitrary. Put
 \[
 \omega_n(f,x) := \textrm{diam}_\rho \ f\left(S_d\left(x,\frac{1}{n}\right)\right)
 \ \ \ (n \in \mathbb{N}),
 \]
 noting that the diameter of the image of the ball in the target space $Y$ could
 be infinite.  In any case, for each positive integer $n\in \mathbb N$,
 \[
 \omega_n(f,x) \geq \omega_{n+1}(f,x),
 \]
 so that
 \[
 \textrm{lim}_{n\to\infty}\ \omega_n(f,x) = \textrm{inf}_{n \in \mathbb{N}}\
 \omega_n(f,x)
 \]
 is an extended nonnegative real number that is called the \textit{oscillation} of
 $f$ at $x$ and is denoted by $\omega (f,x)$ (see, e.g., \cite [p. 78] {HS}). Some
 basic well-known facts about oscillation are the following:
 \begin{enumerate}
 \item[(1.1)] $\omega (f,x) = 0$ if and only if $f$ is continuous at $x$;
 \item[(1.2)] $x \mapsto \omega (f,x)$ is upper semicontinuous;
 \item[(1.3)] $f$ is globally uniformly continuous if and only if $\langle \omega_n
 (f,\cdot) \rangle$ converges uniformly to the zero function on $X$.
 \end{enumerate}

 An oscillation function $\omega (f,\cdot)$ is thus a nonnegative extended real-valued upper semicontinuous function that must take on the value zero at each isolated point of the space, as each function is automatically continuous at isolated points.  Conversely, a function $g$ with these properties can be shown to be an isolation function for some Borel real-valued function defined on the space, as shown only fairly recently by Ewert and Ponomarev \cite [Theorem 4] {EP}.

 One way to define the oscillation of a function $f$ from $X$ to $Y$ at a nonempty
 subset $A$ of $X$ was proposed by Beer and Levi \cite {BL1,BL2}. Consistent with
 our notation for open balls, put
 \[
 S_d(A,\varepsilon) := \bigcup_{a \in A} S_d(a,\varepsilon);
 \]
 this union is often called the $\varepsilon$-\textit{enlargement} of the set $A$,
 refer to \cite {Be}.  Then for each $n \in \mathbb{N}$, we put
 \[
 \Omega_n(f,A) := \sup \ \left\{\rho (f(x),f(w)) : x, w \in S_d
 \left(A,\frac{1}{n}\right)
 \ \textrm{and} \ d(x,w) < \frac{1}{n}\right\},
 \]
 and call
 \[
 \Omega (f,A) := \lim_{n\to\infty} \ \Omega_n(f,A) = \inf_{n \in \mathbb{N}} \
 \Omega_n(f,A)
 \]
 the \textit{oscillation of} $f$ \textit{at} $A$.  Easily, if $A = \{x\}$, then
 $\Omega (f, \{x\}) = \omega(f,x)$.

 Concerning the oscillation $\Omega(f, A)$, one may ask the following question:
 What are the counterparts of properties (1.1) - (1.3)? First of all, note that
 neither continuity of $f$ on $X$ nor uniform continuity of $f$ restricted to $A$
 ensures that $\Omega (f,A)$ is zero or even finite: consider $f : [0,\infty)
 \times [0,\infty) \rightarrow \mathbb{R}$ defined by $f(x,y) = xy$, where
 \[
 A = \{(x,y) : x = 0 \ \textrm{or} \ y = 0\}.
 \]
 However, if $A$ is compact and $f$
 is globally continuous, then the standard proof of the uniform continuity of
 $f$ restricted to $A$ shows that $\Omega (f,A) = 0$.  More precisely, each
 nonempty subset $A$ on which each globally continuous function on $X$ has
 oscillation zero at $A$ has this characteristic property \cite [Theorem 5.2] {BL1}:
 each sequence  $\langle a_n \rangle$ in $A$ along which $\lim_{n\rightarrow\infty}
 \ d(a_n, X - \{a_n\}) = 0$ must cluster.   A subset that exhibits this property
 is called a \textit{UC-subset}; trivially, each relatively compact subset is a
 UC-subset. If $X$ is a UC-subset of itself, then the metric space is called a
 \textit{UC-space}; their characteristic properties were first systematically
 described by Atsuji \cite {At} (see also \cite {Be,JK}).

 The family of all nonempty UC-subsets, like the family of all nonempty
 relatively compact subsets, form a \textit{bornology}:
 \begin{enumerate}
 \item[(1.4)] they are an hereditary family;
 \item[(1.5)] they are stable under finite unions;
 \item[(1.6)] they form a cover of $X$.
 \end{enumerate}
 The largest bornology on $X$ is the family of all nonempty subsets $\mathscr
 {P}_0(X)$ and the smallest is the family of all nonempty finite subsets
 $\mathscr {F}_0(X)$. Three other bornologies of note are the family of all
 nonempty metrically bounded subsets, the family of all nonempty totally bounded
 subsets, and the family of all nonempty Bourbaki bounded subsets \cite {BG, Bo,
 GM, He, Vr}, also called the \emph{finitely chainable subsets} \cite {At}.

 Beer and Levi called $f$ \textit{strongly uniformly continuous} on $A$ provided
 $\Omega (f,A) = 0$ as this property obviously implies that $f$ restricted to $A$
 is uniformly continuous.  They characterized strong uniform continuity in various
 ways, most notably, in terms of the preservation of nearness to subsets of $A$
 \cite [Theorem 3.1] {BL1}, and in terms of the continuity of the induced direct
 image map from $\mathscr {P}_0(X)$ to $\mathscr {P}_0(Y)$ at points of $\mathscr
 {P}_0(A)$, where subsets of the domain and codomain are equipped with the the
 Hausdorff pseudometric topologies as determined by $d$ and $\rho$, respectively
 \cite [Theorem 3.3] {BL1}.  Strong uniform continuity of $f$ at $A$ is a
 variational alternative to the uniform continuity of the restriction of $f$ to
 $A$: for every $\varepsilon > 0$, there exists $\delta > 0$ such that for each
 $a \in A$ and $x \in X, d(a,x) < \delta$ implies $\rho (f(a),f(x)) < \varepsilon$
 \cite[Theorem 4.3] {BL1}.  Furthermore, $f$ is strongly uniformly continuous on $A$
 if and only if $\langle \omega_n (f,\cdot) \rangle$ converges uniformly to the
 zero function on $A$ \cite [Theorem 3.1] {BL1}, which presents rather convincing
 evidence that strong uniform continuity on a subset is the correct generalization
 of global uniform continuity.

 With respect to the Hausdorff extended pseudometric topology $\tau_{H_d}$
 determined by the Hausdorff distance $H_d$ on $\mathscr {P}_0(X)$ \cite
 [Definition 4.1.5] {KT}, the map $A \mapsto \Omega (f,A)$ is upper semicontinuous
 for an arbitrary function $f$ from $X$ to $Y$ \cite [Theorem 4.3] {BL1} which
 yields the known upper semicontinuity of $x \mapsto \omega (f,x)$ as a corollary,
 since $x \mapsto \{x\}$ is an isometric embedding of $X$ into the hyperspace.
 Obviously, one cannot expect continuity of $A \mapsto \Omega (f,A)$ for an
 arbitrary function $f$ with respect to any topology on $\mathscr {P}_0(X)$
 with respect to which $x \mapsto \{x\}$ is a topological embedding.  It is not
 even true that $A \mapsto \Omega (f,A)$ need be $\tau_{H_d}$-continuous on
 $\mathscr {P}_0(X)$ for a continuous function $f$.

 \begin{ex}
 For each $n \in \mathbb{N}$, let $A_n = \left\{\left(\frac{1}{n},\frac{k}{n}
 \right): k \in \mathbb{N}\right\}$, let $A = \{0\} \times [0,\infty)$ and let
 $X = A \cup \bigcup_{n \in \mathbb N} A_n$ equipped with the Euclidean metric
 $d$ for the plane. Define $f : X \rightarrow \mathbb{R}$ by
 \[
 f(x) = \begin {cases}  1,  & \textrm{if} \ x = \left(\frac{1}{n},\frac{k}{n}
 \right) \
 \textrm{with} \ k \geq n^2;\\
                          0, &  \textrm{otherwise}.
 \end {cases}
 \]
 As $f$ is zero on a neighborhood of each point of $A$ and all other points of
 $X$ are isolated, $f$ is continuous on $X$.  Clearly,
 \[
 \lim_{n\rightarrow\infty} \ H_d(A,A_n) = 0, \ \ \Omega (f,A_n) = 0,
 \]
 because each $A_n$ is a UC-subset, while $\Omega (f,A) = 1$.  Thus, $\Omega
 (f,\cdot)$ fails to be continuous at $A$ with respect to the $H_d$-pseudometric
 topology on $\mathscr {P}_0(X)$.
 \end{ex}

 The last example notwithstanding, given a metrizable space $X$ and a continuous
 real-valued function $f$ on it, we can always find a compatible metric
 $\widehat{d}$ for which $A \mapsto \Omega (f,A)$ is $H_{\widehat{d}}$-continuous
 on $\mathscr {P}_0(X)$, in fact, identically equal to zero: let $d$ be any
 compatible metric and put
 \[
 \widehat{d}(x,w) = d(x,w) + |f(x) - f(w)|,
 \]
 so that $f$ is globally uniformly continuous (in fact Lipschitz) with respect
 to $\widehat{d}$.  Still, one might look for stronger topologies on $\mathscr
 {P}_0(X)$ for a metrizable space $X$ for which $A \mapsto \Omega (f,A)$ is
 continuous for all compatible metrics on $X$ and for all continuous $f$ with
 values in an arbitrary metric target space.  We display such a topology here
 and use it to give a bona fide joint continuity of oscillation result. Finally,
 we show that a subset of $X$ is a UC-subset with respect to a particular
 compatible metric on $X$ as soon as $\Omega (f,A)$ is finite for all continuous
 real-valued functions $f$ on $X$.

 \section{Preliminaries} \label{Sec: Prelim}

All topological spaces will be assumed to contain at least two points.  If $X$
and $Y$ are topological spaces, we write $Y^X$ for the set of all functions
from $X$ to $Y$, and we denote the continuous functions from $X$ to $Y$ by
$C(X,Y)$.  We call an extended real-valued function defined on a topological
space $X$ \textit{upper semicontinuous} (resp. \textit{lower semicontinuous})
at $x \in X$ provided whenever $\langle x_\lambda \rangle_{\lambda \in \Lambda}$
is a net in $X$ convergent to $x \in X$, we have $\limsup_{\lambda \in \Lambda}
\ f(x_\lambda) \leq f(x)$ (resp. $\liminf_{\lambda \in \Lambda} \ f(x_\lambda)
\geq f(x)$).  Global upper semicontinuity means $\forall \alpha \in \mathbb{R},
\ f^{-1} ([-\infty,\alpha))$ is open, whereas global lower semicontinuity means
$\forall \alpha \in \mathbb{R}, \ f^{-1} ((\alpha,\infty])$ is open.  An extended
real-valued function on a topological space $X$ is continuous with respect to
the usual topology on the target space $[-\infty,\infty]$ if and only if it is
both lower semicontinuous and upper semicontinuous.

Let $\langle X,d \rangle$ be a metric space.  For $A \in \mathscr {P}_0(X)$ and
$x \in X$, we write $d(x,A)$ for $\inf\{d(x,a) : a \in A\}$, and
$\textrm{diam}_d(A)$ for
\[
\sup \{d(a_1,a_2) : a_1 \in A  \ \textrm{and} \ a_2 \in A\}.
\]

We now discuss some basic topologies on $\mathscr {P}_0(X)$ for a metrizable
topological space $X$.  In the literature, these topologies, called
\textit{hyperspace topologies}, are often restricted to the nonempty closed
subsets of $X$.  We restrict our attention to certain \textit{admissible}
hyperspace topologies, i.e., those for which $x \mapsto \{x\}$ is a topological
embedding \cite [p. 1] {Be}.   First, if $A$ and $B$ are nonempty subsets of
$X$, the \textit{Hausdorff distance} between them as determined by a compatible
metric $d$ is defined by
\[
H_d(A,B) := \inf \ \{\varepsilon > 0: A \subseteq S_d(B,\varepsilon) \
\textrm{and} \ B \subseteq S_d(A,\varepsilon)\}.
\]
Clearly,
\[
H_d(A,B) = H_d(\textrm{cl} (A), \textrm{\textrm{cl}}(B)),
\]
and if $d$
is unbounded, then we can find nonempty subsets $A$ and $B$ with $H_d(A,B) =
\infty.$  Hausdorff distance so defined gives an extended pseudometric on
$\mathscr {P}_0(X)$.  A countable local base for the topology $\tau_{H_d}$ that
it determines at $A \in \mathscr {P}_0(X)$ consists of all sets of the form
\[
\left\{B \in \mathscr {P}_0(X) : H_d(A,B) < \frac{1}{n}\right\},
\]
where $n$ runs over the positive integers.  It can be shown that $H_d(A,B)$ is
the uniform distance between the associated distance functionals $d(\cdot,A)$
and $d(\cdot,B)$ \cite [Theorem 1.5.1] {Be}, and that two compatible metrics
determined the same hyperspace topologies if and only if they are uniformly
equivalent \cite [Theorem 3.3.2] {Be}.

We next introduce two ``hit-and-miss" topologies on $\mathscr {P}_0(X)$, for
which we need some additional (now standard) notation. For $E \in \mathscr
{P}_0(X)$, we put
\[
E^+ := \{A \in \mathscr {P}_0(X) : A \subseteq E \},
\]
and for $\mathscr {E} \subseteq \mathscr {P}_0(X)$, we put
\[
\mathscr {E}^- := \{A \in \mathscr {P}_0(X) : \forall E \in \mathscr {E},
\ E \cap A \neq \emptyset \}.
\]
The \textit{finite topology} $\tau_{fin}$ on $\mathscr {P}_0(X)$, often called
the \textit{Vietoris topology}, is generated by all sets of the form $V^+$ where
$V$ is an open subset of $X$ plus all sets of the form $\mathscr{V}^-$ where
$\mathscr {V}$ is a finite family of open subsets of $X$ (see, e.g., \cite{Be,
KT, Mi}).  Replacing finite families of open sets by the larger collection
of locally finite families of open sets, we obtain the finer \textit{locally
finite topology} $\tau_{loc fin}$ \cite {Be,BHPV,BN,Ma}.  For nets of nonempty
closed subsets, we cite the following classical results: $\langle A_\lambda
\rangle$ converges in the finite (resp. locally finite) topology to $A$ if and
only if $\langle d(\cdot,A_\lambda) \rangle$ converges pointwise (resp. uniformly)
to $d(\cdot, A)$ for each metric $d$ compatible with the topology of $X$ \cite
{Be,BHPV,BLLN}.  As noted above, uniform convergence of distance functionals
with respect to a particular metric $d$ means $H_d$-convergence of the
underlying net of subsets; pointwise converge of distance functionals with
respect to $d$ is called $d$-\textit{Wijsman convergence} for the underlying
net of subsets (see, e.g., \cite {Be1,CJM,Co,LL,Zs}).

\section{Continuity of Oscillation with respect to $\tau_{locfin}$}
\label{Sec: Contin}

We first give an alternate presentation of the oscillation of a function $f$
between metric spaces at a nonempty subset that we will use in the sequel.

\begin{prop}  \label {alt}
Let $\langle X,d \rangle$ and $\langle Y,\rho \rangle$ be metric spaces and
let $f \in Y^X$. Then for each $A \in \mathscr {P}_0(X)$, we have
\[
\Omega (f,A) = \inf_{n \in \mathbb{N}} \  \sup_{a \in A}\ \omega_n (f, a).
\]
\end{prop}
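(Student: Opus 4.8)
The plan is to prove the claimed identity by showing that the two quantities have exactly the same tail behavior, establishing two comparison inequalities between the defining sequences and then passing to the limit (equivalently, infimum over $n$). Recall the two relevant sequences: on one side $\Omega_n(f,A) = \sup\{\rho(f(x),f(w)) : x,w \in S_d(A,\tfrac1n), \ d(x,w) < \tfrac1n\}$, and on the other side $\sigma_n := \sup_{a\in A} \omega_n(f,a)$, where $\omega_n(f,a) = \mathrm{diam}_\rho f(S_d(a,\tfrac1n))$. Both sequences are nonincreasing in $n$, so each has a limit equal to its infimum, and it suffices to compare their infima.

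First I would bound $\sigma_n$ above by $\Omega_n(f,A)$. Fix $a \in A$ and take any two points $x,w \in S_d(a,\tfrac1n)$; then $d(x,a) < \tfrac1n$ and $d(w,a) < \tfrac1n$, so both $x$ and $w$ lie in $S_d(A,\tfrac1n)$. This handles the enlargement requirement, but the pair $(x,w)$ need not satisfy $d(x,w) < \tfrac1n$, only $d(x,w) < \tfrac2n$ by the triangle inequality; hence a direct comparison gives $\omega_n(f,a) \le \Omega_{m}(f,A)$ only after adjusting indices. The clean way around this is to observe $\omega_{2n}(f,a) \le \Omega_n(f,A)$: if $x,w \in S_d(a,\tfrac1{2n})$ then $x,w \in S_d(A,\tfrac1n)$ and $d(x,w) < \tfrac1n$, so $\rho(f(x),f(w)) \le \Omega_n(f,A)$, and taking the supremum over such $x,w$ and then over $a \in A$ yields $\sigma_{2n} \le \Omega_n(f,A)$. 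Passing to the infimum over $n$ gives $\inf_n \sigma_n \le \inf_n \Omega_n(f,A) = \Omega(f,A)$.

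For the reverse inequality I would bound $\Omega_n(f,A)$ by a $\sigma$-term. Take $x,w \in S_d(A,\tfrac1n)$ with $d(x,w) < \tfrac1n$. Choose $a \in A$ with $d(x,a) < \tfrac1n$; then $d(w,a) \le d(w,x) + d(x,a) < \tfrac2n$, so both $x$ and $w$ lie in $S_d(a,\tfrac2n)$, giving $\rho(f(x),f(w)) \le \mathrm{diam}_\rho f(S_d(a,\tfrac2n)) = \omega_{n/2}(f,a) \le \sigma_{\lfloor n/2 \rfloor}$ (reading $\omega_m$ at the appropriate integer index $m \le n/2$). Taking the supremum over admissible $(x,w)$ shows $\Omega_n(f,A) \le \sigma_{\lfloor n/2\rfloor}$, and again passing to the infimum over $n$ gives $\Omega(f,A) \le \inf_n \sigma_n$, completing the argument.

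The only delicate point, and the step I expect to require the most care, is the bookkeeping of the radius constants: the triangle inequality forces factor-of-two shifts in the index $n$ in both directions, so I must phrase each inequality between $\Omega$ and $\sigma$ at \emph{shifted} indices and only then exploit monotonicity of both sequences to conclude that their infima coincide. Since both $\langle \Omega_n(f,A)\rangle$ and $\langle \sigma_n \rangle$ are nonincreasing, interleaving inequalities of the form $\sigma_{2n} \le \Omega_n(f,A) \le \sigma_{\lfloor n/2\rfloor}$ squeezes the two limits together, so no genuine analytic obstacle remains beyond this index arithmetic.
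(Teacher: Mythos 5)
Your proposal is correct and follows essentially the same route as the paper: both directions rest on the interleaving inequalities $\sigma_{2n} \le \Omega_n(f,A)$ and $\Omega_{2n}(f,A) \le \sigma_n$ obtained from the triangle inequality, followed by passing to infima of the two nonincreasing sequences. The only cosmetic difference is that you compare the sequences directly and take infima, whereas the paper phrases each direction via an auxiliary bound $\alpha$ with the quantities assumed finite; the content is identical.
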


\begin{proof}
Put
\[
\Omega_n^*(f,A) := \sup_{a \in A} \ \ \omega_n(f, a)
\]
and then put
\[
\Omega^*(f,A) := \inf_{n \in \mathbb{N}} \ \Omega_n^*(f,A).
\]
We must show that
\begin{itemize}
\item[(3.1)] $\Omega (f,A) \leq \Omega^*(f,A)$; and
\item[(3.2)] $\Omega^* (f,A) \leq \Omega (f,A)$.
\end{itemize}
In (3.1) we may assume that $\Omega^*(f,A)$ is finite, and in (3.2), we may assume
that $\Omega (f,A)$ is finite. For (3.1), suppose $\Omega^*(f,A) < \alpha < \infty$
and choose $n \in \mathbb{N}$  such that $\Omega_n^*(f,A) < \alpha$.  We claim
that $\Omega_{2n}(f,A) < \alpha$.  Let $x$ and $w$ be arbitrary members of
$S_d \left(A, \frac{1}{2n}\right)$ with $d(x,w) < \frac{1}{2n}$.  Choosing $a \in A$
with $d(x,a) < \frac{1}{2n}$,  we have $\{x,w\} \subseteq S_d\left(a,\frac{1}{n}\right)$,
and so
\[
\rho(f(x),f(w)) \leq \textrm{diam}_\rho \ f\left(S_d\left(a,\frac{1}{n}\right)
\right) \leq \Omega_n^*(f,A),
\]
so that
\[
\Omega_{2n}(f,A) \leq \Omega_n^*(f,A) < \alpha,
\]
which establishes the claim. This yields that $\Omega (f,A) \leq \Omega^*(f,A)$.

For (3.2), let $\alpha$ satisfy $\Omega(f,A) < \alpha < \infty$ and then choose
$n \in \mathbb{N}$  such that $\Omega_n (f,A) < \alpha$.   Let $a \in A$ be
arbitrary and choose $x,w$ in $S_d\left(a, \frac{1}{2n}\right)$. The triangle inequality
gives
\[
d(x,w) < 2 \cdot \frac{1}{2n} = \frac{1}{n},
\]
and since $\{x,w\} \subseteq S_d \left(A,\frac{1}{n}\right),$ we have
$\rho(f(x),f(w)) \leq \Omega_n(f,A)$.  This yields $\omega_{2n}(f,a) \leq
\Omega_n (f,A)$ and so
\[
\Omega^* (f,A)  \leq \Omega_{2n}^* (f,A) \leq \Omega_n (f,A) < \alpha,
\]
from which $\Omega^* (f,A) \leq \Omega (f,A)$ follows.
\end{proof}

As an application of our last result, we now provide a counterpart of (1.3) of Section
\ref{sec: Intro} for the sequence $\langle \Omega_n(f,\cdot) \rangle$.

\begin{prop}
 Let $\langle X,d \rangle$ and $\langle Y,\rho \rangle$ be metric spaces, and $f$ be a
 function from $X$ to $Y$. Then $f$ is globally uniformly continuous on $X$ if and only if $\langle
 \Omega_n (f,\cdot) \rangle$ converges uniformly to the zero function on ${\mathscr P}_0(X)$.
\end{prop}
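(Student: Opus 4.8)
The plan is to reduce the assertion about uniform convergence of $\langle\Omega_n(f,\cdot)\rangle$ over the whole hyperspace $\mathscr{P}_0(X)$ to the behavior of the single sequence $\langle\Omega_n(f,X)\rangle$, and then to recognize the latter as a reformulation of global uniform continuity. The crucial observation is that for each fixed $n$ the functional $A \mapsto \Omega_n(f,A)$ is monotone under inclusion: if $A \subseteq B$ then $S_d(A,\frac{1}{n}) \subseteq S_d(B,\frac{1}{n})$, so the supremum defining $\Omega_n(f,A)$ ranges over a subfamily of the pairs allowed for $\Omega_n(f,B)$, whence $\Omega_n(f,A) \leq \Omega_n(f,B)$. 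Since $S_d(X,\frac{1}{n}) = X$, the largest set in $\mathscr{P}_0(X)$ furnishes the largest value, and therefore
\[
\sup_{A \in \mathscr{P}_0(X)} \Omega_n(f,A) = \Omega_n(f,X) = \sup\left\{\rho(f(x),f(w)) : d(x,w) < \tfrac{1}{n}\right\}.
\]
By definition, uniform convergence of $\langle\Omega_n(f,\cdot)\rangle$ to the zero function on $\mathscr{P}_0(X)$ is exactly the statement that this supremum tends to $0$, i.e.\ that $\lim_{n\to\infty}\Omega_n(f,X) = 0$.

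It then remains to verify that $\lim_{n\to\infty}\Omega_n(f,X) = 0$ if and only if $f$ is globally uniformly continuous, which I would do directly from the $\varepsilon$--$\delta$ definitions. For the forward direction, given $\varepsilon > 0$ I would take $\delta > 0$ witnessing uniform continuity for $\varepsilon/2$ and then choose $n$ with $\frac{1}{n} < \delta$; every pair admissible for $\Omega_n(f,X)$ then satisfies $\rho(f(x),f(w)) < \varepsilon/2$, so $\Omega_n(f,X) \leq \varepsilon/2 < \varepsilon$. For the converse, given $\varepsilon > 0$ I would pick $n$ with $\Omega_n(f,X) < \varepsilon$ and set $\delta = \frac{1}{n}$; then $d(x,w) < \delta$ forces $\rho(f(x),f(w)) \leq \Omega_n(f,X) < \varepsilon$, which is precisely uniform continuity.

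The entire content of the argument sits in the reduction identity of the first paragraph; once the supremum over $\mathscr{P}_0(X)$ is identified with $\Omega_n(f,X)$, both implications are immediate, so I anticipate no genuine obstacle. I note in passing that one could instead deduce the result from property (1.3) together with the interleaving estimates $\Omega_{2n}(f,A) \leq \sup_{a\in A}\omega_n(f,a)$ and $\sup_{a\in A}\omega_{2n}(f,a) \leq \Omega_n(f,A)$ established in the proof of Proposition \ref{alt}, specialized to $A = X$; the direct route above, however, seems the most economical.
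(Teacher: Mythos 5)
Your proof is correct, but it takes a genuinely different route from the paper's. You exploit the monotonicity of $A \mapsto \Omega_n(f,A)$ under inclusion and the fact that $X$ is the top element of $\mathscr{P}_0(X)$ with $S_d(X,\frac{1}{n}) = X$, so that uniform convergence over the whole hyperspace collapses to the single condition $\lim_{n\to\infty}\Omega_n(f,X)=0$, which you then match against the $\varepsilon$--$\delta$ definition of uniform continuity; the index $n$ never needs to be doubled. The paper instead routes everything through Proposition \ref{alt}: it uses the interleaving inequalities $\Omega_{2n}(f,A) \leq \Omega_n^*(f,A)$ and $\Omega_{2n}^*(f,A) \leq \Omega_n(f,A)$ (with $\Omega_n^*(f,A) = \sup_{a\in A}\omega_n(f,a)$) from that proof, deduces the forward direction from a uniform bound on the pointwise oscillations $\omega_n(f,\cdot)$, and recovers the converse by specializing to singletons $A=\{x\}$ --- essentially the second route you mention in passing, which presents the result as the hyperspace counterpart of property (1.3). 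Your argument is the more economical and self-contained of the two, and the reduction identity $\sup_{A\in\mathscr{P}_0(X)}\Omega_n(f,A) = \Omega_n(f,X)$ is a clean observation worth having; what the paper's approach buys is a demonstration of how Proposition \ref{alt} is used, consistent with the rest of the section, and an explicit link between $\Omega_n$ on arbitrary sets and the pointwise oscillation sequence $\langle\omega_n(f,\cdot)\rangle$ of property (1.3). Both proofs are complete and correct.
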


\begin{proof}
Suppose that $f$ is globally uniformly continuous on $X$. Let $\varepsilon>0$ be arbitrary.
Then there exists $\delta >0$ such that for any $x, y\in X$ with $d(x,y) < \delta$,
$\rho(f(x),f(y)) < \varepsilon$. If we choose $n_0 \in \mathbb N$ such that $\frac{2}{n_0}
< \delta$, then we have $\omega_n(f,x) \leq \varepsilon$ for all $x\in X$ whenever $n \ge n_0$.
It follows from Proposition \ref{alt} that for any $A \in
{\mathscr P}_0(X)$,
\[
\Omega_{2n}(f, A) \le \Omega_n^*(f, A) \le \varepsilon
\]
whenever $n \ge n_0$. This means that  $\langle \Omega_n (f,\cdot) \rangle$ converges uniformly
to the zero function on ${\mathscr P}_0(X)$.

To see the converse, assume that $\langle \Omega_n (f,\cdot) \rangle$ converges uniformly
to the zero function on ${\mathscr P}_0(X)$. Let $\varepsilon>0$ be arbitrary. Then, by Proposition
\ref{alt}, one can find some $n_0 \in \mathbb N$ such that for any $A \in {\mathscr P}_0(X)$,
\[
\Omega_{2n}^*(f, A) \le \Omega_n(f, A) < \varepsilon
\]
whenever $n \ge n_0$. Thus, for any $x \in X$, we have
\[
\omega_{2n}(f,x) = \Omega_{2n}^*(f,\{x\}) < \varepsilon,
\]
whenever $n \ge n_0$. It follows that for any $x, y \in X$ with $d(x,y) < \frac{1}{2n_0}$,
we have $\rho(f(x), f(y)) < \varepsilon$. This confirms the global uniform continuity of
$f$ on $X$.
\end{proof}

In \cite {Be1}, it is shown that if  $\langle X,d \rangle$ and $\langle Y,\rho
\rangle$ are metric spaces and $f \in Y^X$, then $A \mapsto \Omega (f,A)$ is
upper semicontinuous on $\mathscr {P}_0(X)$ if it is equipped with $\tau_{H_d}$.
But the argument provided shows that something else is true, namely:

\begin{prop}
Let $X$ be a metrizable space and let $\langle Y,\rho \rangle$ be a metric space.
Then for each metric $d$ on $X$ that is compatible with the topology and for each
$f \in Y^X$, the assignment $A \mapsto \Omega (f,A)$ computed using $d$ and $\rho$
is upper semicontinuous on $\mathscr {P}_0(X)$ equipped with $\tau_{fin}$.
\end{prop}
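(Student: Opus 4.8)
The plan is to establish global upper semicontinuity directly, by showing that for every real $\alpha$ the sublevel set $\{A \in \mathscr{P}_0(X) : \Omega(f,A) < \alpha\}$ is $\tau_{fin}$-open; by the global reformulation of upper semicontinuity recorded in Section~\ref{Sec: Prelim} this is exactly what is required. The one essential tool is the reformulation $\Omega(f,A) = \inf_{n}\sup_{a\in A}\omega_n(f,a)$ furnished by Proposition~\ref{alt}, which lets me read oscillation at $A$ off from the pointwise quantities $\omega_n(f,a)$, $a \in A$.

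So I would fix $\alpha \in \mathbb{R}$ and take $A$ with $\Omega(f,A) < \alpha$. By Proposition~\ref{alt} I can choose $n \in \mathbb{N}$ with $\beta := \sup_{a\in A}\omega_n(f,a) < \alpha$. The crucial point is that the correct $\tau_{fin}$-neighborhood of $A$ is a purely ``upper'' one: I put $V := S_d\left(A,\frac{1}{2n}\right)$, which is an open subset of $X$, so that $V^+ = \{B \in \mathscr{P}_0(X) : B \subseteq V\}$ is a subbasic $\tau_{fin}$-open set, and $A \in V^+$ since each $a \in A$ lies in $S_d\left(a,\frac{1}{2n}\right) \subseteq V$. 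No ``hitting'' (lower) conditions are needed.

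It then remains to bound $\Omega(f,B)$ for an arbitrary $B \in V^+$. Given $b \in B \subseteq V$, I pick $a \in A$ with $d(a,b) < \frac{1}{2n}$; the triangle inequality gives $S_d\left(b,\frac{1}{2n}\right) \subseteq S_d\left(a,\frac{1}{n}\right)$, whence $\omega_{2n}(f,b) = \textrm{diam}_\rho\, f\left(S_d\left(b,\frac{1}{2n}\right)\right) \le \textrm{diam}_\rho\, f\left(S_d\left(a,\frac{1}{n}\right)\right) = \omega_n(f,a) \le \beta$. Taking the supremum over $b \in B$ and invoking Proposition~\ref{alt} once more gives $\Omega(f,B) \le \sup_{b\in B}\omega_{2n}(f,b) \le \beta < \alpha$. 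Hence $V^+$ is a $\tau_{fin}$-neighborhood of $A$ contained in the sublevel set, which is therefore open.

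I expect the only genuine subtlety to be the recognition in the second step that the required neighborhood involves only the containment $B \subseteq S_d\left(A,\frac{1}{2n}\right)$ and nothing else. This is precisely why the conclusion survives passage to the finite topology, which is coarser than $\tau_{H_d}$: the Hausdorff-distance argument of \cite{Be1} tacitly uses only the enlargement half of $H_d$-smallness, and that half is already captured by a single set of the form $V^+$. Everything else is the same $n \mapsto 2n$ triangle-inequality bookkeeping seen in the proof of Proposition~\ref{alt}, and needs no compactness or completeness hypotheses on $X$ or $Y$.
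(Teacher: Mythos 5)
Your proof is correct and takes essentially the same route as the paper: both arguments hinge on the single subbasic neighborhood $S_d\left(A,\frac{1}{2n}\right)^+$ and the doubling-index estimate. The only cosmetic difference is that you pass through Proposition~\ref{alt} and the pointwise quantities $\omega_n(f,a)$, whereas the paper gets $\Omega(f,B) \leq \Omega_{2n}(f,B) \leq \Omega_n(f,A) < \alpha$ directly from the definition of $\Omega_n$; the underlying triangle-inequality bookkeeping is identical.
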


\begin{proof}
Suppose $A \in \mathscr {P}_0(X)$.  There is nothing to prove if $\Omega(f,A) = \infty$.
Otherwise, take $\alpha \in \mathbb{R}$ with $\Omega (f,A) < \alpha$ as computed with
respect to the compatible metric $d$.  Choose $n \in \mathbb{N}$ with $\Omega_n
(f,A) < \alpha$.  Let $B \in S_d \left(A,\frac{1}{2n}\right)^+$ be arbitrary; by
definition, $B \subseteq S_d\left(A,\frac{1}{2n}\right)$, so we have
\[
\Omega (f,B) \leq \Omega_{2n}(f,B) \leq \Omega_n(f,A) < \alpha.
\]
The result now follows because $S_d\left(A,\frac{1}{2n}\right)^+$ is a
$\tau_{fin}$-neighborhood of $A$.
\end{proof}

Unfortunately, continuity of oscillation need not hold with respect to the finite
topology for each compatible metric on the domain, even if the function $f$ is
real-valued and globally continuous.  We provide a general construction in our
next example.

\begin{ex}
Suppose that we have a metric space $\langle X,d \rangle$  that contains a nonempty
subset $A$ that fails to be a UC subset but that is nevertheless a countable union
of nonempty UC-subsets (for example, any unbounded dense-in-itself subset of
$\mathbb{R}$ equipped with the usual metric does the job).  Since the nonempty
UC-subsets form a bornology, we can assume that there is an increasing sequence
of nonempty UC-subsets $\langle A_n \rangle$ with union $A$.   Clearly,  $\langle
A_n \rangle$ is  $\tau_{fin}$-convergent to $A$, but taking $f \in C(X,\mathbb{R})$,
say, that fails to be strongly uniformly continuous at $A$ (see \cite [Corollary 5.3]
{BL1}), we have $\Omega (f,A_n) = 0$ for each $n$ while $\Omega (f,A) > 0$.
\end{ex}

\begin{thm}  \label {lf}
Let $X$ be a metrizable space and let $\langle Y,\rho \rangle$ be a metric space.
Then for each $f \in C(X,Y)$ and for each compatible metric $d$ for $X, \ A \mapsto
\Omega (f,A)$ computed with respect to $d$ and $\rho$ is continuous on $\mathscr
{P}_0(X)$ equipped with the locally finite topology.
\end{thm}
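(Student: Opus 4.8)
The plan is to prove continuity of $A \mapsto \Omega(f,A)$ at an arbitrary $A \in \mathscr{P}_0(X)$ with respect to $\tau_{locfin}$. Since Proposition 3.5 (the finite-topology upper semicontinuity result) already gives upper semicontinuity with respect to $\tau_{fin}$, and $\tau_{locfin}$ is finer than $\tau_{fin}$, upper semicontinuity comes for free. So the entire content of the theorem is to establish **lower semicontinuity** with respect to $\tau_{locfin}$: given a net $\langle A_\lambda \rangle$ that $\tau_{locfin}$-converges to $A$, I must show $\liminf_\lambda \Omega(f,A_\lambda) \geq \Omega(f,A)$.

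Here is where the alternate formula from Proposition 3.3, $\Omega(f,A) = \inf_n \sup_{a \in A} \omega_n(f,a)$, should do the heavy lifting. Suppose $\Omega(f,A) > \alpha$ for some real $\alpha$; I want to find a $\tau_{locfin}$-neighborhood of $A$ on which $\Omega(f,\cdot) > \alpha$. Since $\Omega(f,A) = \inf_n \Omega_n^*(f,A) > \alpha$, every term satisfies $\Omega_n^*(f,A) = \sup_{a \in A} \omega_n(f,a) > \alpha$; in particular, fixing a convenient $n$, there is a witness point $a_0 \in A$ with $\omega_n(f,a_0) > \alpha$, meaning $\mathrm{diam}_\rho f(S_d(a_0,\tfrac{1}{n})) > \alpha$. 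I would then extract two points $x, w \in S_d(a_0,\tfrac{1}{n})$ with $\rho(f(x),f(w)) > \alpha$ and, crucially, use continuity of $f$ to fatten this into a stable configuration: there are radius-$r$ balls $S_d(x,r)$ and $S_d(w,r)$ on which $f$ varies little, so that any $a'$ near $a_0$ still sees two points whose images are more than $\alpha$ apart. The goal is to package the requirement ``$A_\lambda$ contains a point near $a_0$'' as a $\tau_{locfin}$-hit condition $\mathscr{V}^-$ for a suitable open set (or locally finite family of open sets) around $a_0$.

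**The hard part will be** converting pointwise information at the single witness $a_0$ into a neighborhood condition that forces $\Omega(f,A_\lambda)$ to stay large, because $\Omega(f,\cdot)$ is a supremum-type quantity and a priori one lost witness could be compensated elsewhere — but in fact the difficulty runs the other way. Lower semicontinuity is subtle precisely because Example 3.6 shows it fails for $\tau_{fin}$: a sequence of UC-subsets can climb up to a non-UC set $A$ while each has oscillation zero. The role of the \emph{locally finite} topology is exactly to rule this out. The key structural input must be that $\tau_{locfin}$-convergence is characterized (for closed sets) by \emph{uniform} convergence of distance functionals $d(\cdot,A_\lambda) \to d(\cdot,A)$, as recorded in the Preliminaries. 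So I would translate the hit-condition into a uniform statement: for the witness point $a_0$ and a chosen small $\varepsilon$, uniform convergence $d(\cdot,A_\lambda) \to d(\cdot,A)$ forces, for all large $\lambda$, the existence of $a_\lambda \in A_\lambda$ with $d(a_\lambda, a_0)$ small \emph{uniformly}. This is what defeats the Example 3.6 pathology, and I expect this to be the crux of the argument.

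**Finally**, I would combine the pieces. Fix $\alpha < \Omega(f,A)$ and produce the witness $a_0$ and the fattened two-point configuration via continuity of $f$, obtaining a radius $r$ and a target separation exceeding $\alpha$ that is stable under moving $a_0$ by less than some $\eta > 0$. Then invoke uniform convergence of distance functionals to get $a_\lambda \in A_\lambda$ with $d(a_\lambda, a_0) < \eta$ for all $\lambda$ past some index. For each such $\lambda$, the fattened configuration sits inside a single small ball about $a_\lambda$, so $\omega_m(f,a_\lambda) > \alpha$ for an appropriate $m$, whence $\Omega_m^*(f,A_\lambda) > \alpha$. Taking the infimum over $n$ in the alternate formula (and checking the argument survives passing to the infimum, i.e. that the separation persists down the sequence of radii, which is the routine bookkeeping I would carry out carefully but do not detail here) yields $\Omega(f,A_\lambda) > \alpha$ eventually, giving $\liminf_\lambda \Omega(f,A_\lambda) \geq \alpha$. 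Since $\alpha < \Omega(f,A)$ was arbitrary, lower semicontinuity follows, and together with the upper semicontinuity inherited from Proposition 3.5 this completes the proof of continuity.
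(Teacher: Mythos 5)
There is a genuine gap, and it sits exactly where you wave your hands: the ``routine bookkeeping \dots\ that the separation persists down the sequence of radii.'' That step is the entire theorem, and your single-witness strategy cannot deliver it. By Proposition \ref{alt}, $\Omega(f,B)=\inf_m \sup_{b\in B}\omega_m(f,b)$, so to force $\Omega(f,A_\lambda)>\alpha$ you must exhibit, \emph{for every} $m$, some $b\in A_\lambda$ with $\omega_m(f,b)>\alpha$. Your fattened two-point configuration around one witness $a_0$ at one scale $\tfrac1n$ only guarantees $\omega_m(f,a_\lambda)>\alpha$ for those $m$ small enough that $S_d(a_\lambda,\tfrac1m)$ still contains both points $x,w$; for large $m$ it gives nothing, and since $f$ is continuous you cannot expect any lower bound on $\omega_m(f,a_\lambda)$ there. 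Worse, the neighborhood your construction produces --- ``$A_\lambda$ hits a small ball about $a_0$'' --- is a single hit-set, hence already a $\tau_{fin}$-neighborhood. If your argument were correct it would prove lower semicontinuity with respect to $\tau_{fin}$, which Example 3.6 of the paper explicitly refutes. This also shows that your identification of the crux (uniform versus pointwise convergence of distance functionals) is misplaced: for hitting a neighborhood of a single fixed point, Wijsman-type pointwise information already suffices, so that distinction is not where $\tau_{locfin}$ earns its keep.

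What is actually needed, and what the paper does, is an \emph{infinite} family of witnesses: distinct points $a_1,a_2,\dots$ in $A$ and scales $\tfrac1{n_1}>\tfrac1{n_2}>\dots\to 0$ with $\mathrm{diam}_\rho\, f\left(S_d\left(a_j,\tfrac1{n_j}\right)\right)>\beta$ for all $j$, arranged (using continuity of $f$ at each $a_j$ to pass to the next, much smaller, scale) so that any point of $S_d\left(a_j,\tfrac1{n_{j+1}}\right)$ sees the whole ball $S_d\left(a_j,\tfrac1{n_j}\right)$ inside a ball of radius $\tfrac2{n_j}$ about itself. Then a set $B$ hitting every $S_d\left(a_j,\tfrac1{n_{j+1}}\right)$ has witnesses at all scales, whence $\Omega(f,B)\ge\beta$. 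The point where continuity of $f$ and the locally finite topology enter jointly is that this infinite family of balls is locally finite precisely because $\langle a_k\rangle$ cannot cluster (a cluster point would violate continuity of $f$ by the diameter lower bound), so $\mathscr V^-$ is a legitimate $\tau_{locfin}$-neighborhood --- but not, in general, a $\tau_{fin}$-neighborhood. Your proposal never constructs this infinite locally finite family, so the lower semicontinuity claim is not established.
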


\begin{proof}
Fix a metric $d$ compatible with the topology of $X$ and let $f \in C(X,Y)$. We
have already seen in our last result that $A \mapsto \Omega (f,A)$ is upper
semicontinuous when $\mathscr {P}_0(X)$ is equipped with the weaker finite
topology, so it remains to show the lower semicontinuity with respect to the
locally finite topology.

Suppose $\Omega (f,A) > \alpha \in \mathbb{R}$.  If \ $\Omega (f,A) = 0$, then
$\mathscr {P}_0(X)$ is a neighborhood of $A$ on which $\Omega (f,\cdot)$ exceeds
$\alpha$.  Otherwise, without loss of generality, we may assume that $0 < \alpha <
\Omega (f,A)$.  We intend to produce a locally finite family $\mathscr {V}$ of open
subsets of $X$ such that $A \in \mathscr {V}^-$, and whenever $ B\in \mathscr {V}^-$,
we have $\Omega (f,B) > \alpha.$

Select $\beta \in (\alpha,\Omega (f,A))$ and set $n_1 = 2$.  By Proposition \ref {alt},
we can find $a_1 \in A$  with
\[
\textrm{diam}_\rho \ f\left(S_d\left(a_1, \frac{1}{n_1}\right)\right) > \beta.
\]
By continuity of $f$ at $a_1$, we can find an even integer $n_2 > n_1$ such that
\[
\textrm{diam}_\rho \ f\left(S_d\left(a_1, \frac{1}{n_2}\right)\right) < \beta.
\]
Again by Proposition \ref {alt}, choose $a_2 \in A$ with
\[
\textrm{diam}_\rho \ f\left(S_d\left(a_2, \frac{1}{n_2}\right)\right) > \beta.
\]
Clearly, $a_2 \neq a_1$, and if $x \in S_d\left(a_1, \frac{1}{n_1}\right)$ then
$S_d\left(x,\frac{2}{n_1}\right) \supseteq S_d\left(a_1,\frac{1}{n_1}\right)$, for
whenever $w \in S_d\left(a_1,\frac{1}{n_1}\right)$, we have
\[
d(w,x) \leq d(w,a_1) + d(a_1,x) < \frac{1}{n_1} + \frac{1}{n_2} < \frac{2}{n_1}.
\]

Suppose we have chosen $2 = n_1 < n_2 < \cdots < n_k$ all even and distinct points
$a_1,a_2,\ldots,a_k$ in $A$ such that
\begin{enumerate}
\item[(3.3)] $\textrm{diam}_\rho \ f\left(S_d\left(a_j,\frac{1}{n_j}\right)\right) >
\beta \ \textrm{for} \ j = 1,2,\ldots, k;$

\item[(3.4)] $\textrm{diam}_\rho \ f\left(S_d\left(a_j,\frac{1}{n_{j+1}}\right)\right)
< \beta \ \textrm{for} \ j = 1,2,\ldots, k-1;$

\item[(3.5)] for each $x \in S_d\left(a_j,\frac{1}{n_{j+1}}\right)$, $S_d\left(x,
\frac{2}{n_j}\right) \supseteq S_d\left(a_j,\frac{1}{n_j}\right) \ \textrm{for} \
j = 1,2,\ldots, k-1.$
\end{enumerate}
By continuity of $f$ at $a_k$, we can choose an even integer $n_{k+1} > n_k$ satisfying
\[
\textrm{diam}_\rho \ f\left(S_d\left(a_k,\frac{1}{n_{k+1}}\right)\right) < \beta.
\]
By Proposition \ref {alt}, choose $a_{k+1} \in A$ such that
\[
\textrm{diam}_\rho \ f(S_d(a_{k+1},\frac{1}{n_{k+1}})) > \beta.
\]
By condition (3.4), $a_1,a_2,\ldots, a_k,a_{k+1}$ are all distinct, and an easy calculation
shows that for each $x \in S_d\left(a_k,\frac{1}{n_{k+1}}\right),$ we have $S_d\left(x,
\frac{2}{n_k}\right) \supseteq S_d\left(a_k,\frac{1}{n_k}\right).$

Continuing to produce a strictly increasing sequence of even integers $\langle n_k
\rangle$ and a sequence of distinct points $\langle a_k \rangle$ in $A$, we conclude
that the family of balls
\[
\left\{S_d\left(a_k,\frac{1}{n_k}\right) : k \in \mathbb{N}\right\}
\]
is locally finite, else $\langle a_k \rangle$ would have a cluster point at which continuity
of $f$ must fail by condition (3.3).  Thus, $\{S_d\left(a_k,\frac{1}{n_{k+1}}\right) : k
\in \mathbb{N}\}$, being a family of smaller balls, is also locally finite and $A \in
\{S_d\left(a_k,\frac{1}{n_{k+1}}\right): k \in \mathbb{N}\}^-.$  We intend to show that
if $B \in \mathscr {P}_0(X)$ hits each ball $S_d\left(a_k,\frac{1}{n_{k+1}}\right)$, then
$\Omega(f,B)>\alpha$.  To see this, it suffices by Proposition \ref {alt} to produce for
each $m \in \mathbb{N}$ some $b \in B$ with
\[
\textrm{diam}_\rho f\left(S_d\left(b, \frac{1}{m}\right)\right) > \beta.
\]

Choose $n_j$ with $\frac{2}{n_j} < \frac{1}{m}$ and then $b \in B \cap S_d\left(a_j,
\frac{1}{n_{j+1}}\right)$. Using condition (3.3) and condition (3.5),
\[
f\left(S_d\left(b,\frac{1}{m}\right)\right) \supseteq f\left(S_d\left(b,\frac{2}{n_j}
\right)\right) \supseteq f\left(S_d\left(a_j,\frac{1}{n_j}\right)\right)
\]
so that by condition (3.3) and Proposition \ref {alt}, $\Omega (f, B) \geq \beta > \alpha.$
\end{proof}

\section{Joint Continuity of Oscillation}

In \cite {BL1}, Beer and Levi introduced the variational notion of strong uniform convergence
of a net of functions $\langle f_\lambda \rangle_{\lambda \in \Lambda}$ from $\langle X,d
\rangle$ to $\langle Y,\rho \rangle$ to a function $f$ on a nonempty subset $A$ of $X$: for
each $\varepsilon >0$ there exists $\lambda_0 \in \Lambda$ such that for each $\lambda \succeq
\lambda_0$, there exists $\delta > 0$ such that for all $x \in S_d(A,\delta), \ \rho (f_\lambda (x),
f(x)) < \varepsilon$ (notice that $\delta$ can depend on $\lambda$!). The family of nonempty
subsets on which strong uniform convergence occurs is stable under finite unions and is
hereditary, and strong uniform convergence on (each member of) a bornology is compatible with
a uniformizable topology on $Y^X$.   If each $f_\lambda \in C(X,Y)$  and $f \in C(X,Y)$ and
$\langle f_\lambda \rangle_{\lambda \in \Lambda}$ is pointwise convergent to $f$, then strong
uniform convergence must occur on each singleton subset of $X$.  Conversely, strong uniform
convergence of a net of continuous functions on each singleton to $f \in Y^X$ ensures that $f$
is continuous \cite {BL1,Bou}.  One cannot overstate how well strong uniform convergence on a
bornology $\mathscr {B}$ comports with strong uniform continuity of functions on $\mathscr {B}$.
A number of subsequent papers on this convergence notion/topology have been written
\cite {BL2,CT,CDH,Ho};  in particular, nontransparent necessary and sufficient conditions on a
bornology $\mathscr {B}$ for the topology of strong uniform convergence to collapse to the
classical topology of $\mathscr{B}$-uniform convergence on $C(X,Y)$ \cite {MN} are known
\cite {BL2}.

But one result that we would hope for is not be had:  joint upper semicontinuity of $(f,A)
\mapsto \Omega (f,A)$ need not hold  even if we restrict our functions to those that are
strongly uniformly continuous on a bornology $\mathscr {B}$ and restrict our sets to members
of $\mathscr {B}$, where our functions are topologized by the topology of strong uniform
convergence on the bornology and $\mathscr {B}$ is topologized by Hausdorff distance \cite
[Example 6.13] {BL1}.   What is needed is a somewhat stronger convergence notion for our
functions that is obtained by flipping quantifiers in the definition of strong uniform
convergence.

\begin{defn}
Let $\langle X,d \rangle$ and $\langle Y,\rho \rangle$ be metric spaces, and let  $A$ be a
nonempty subset of $X$.  A net of functions $\langle f_\lambda \rangle_{\lambda \in \Lambda}$
from $X$ to $Y$ is declared \textit{very strongly uniformly convergent to} $f:X \rightarrow Y$
\textit{on} $A$ if for each $\varepsilon >0$ there exists $\lambda_0 \in \Lambda$ and
$\delta > 0$ such that for all $\lambda \succeq \lambda_0$ and all $x \in S_d(A,\delta)$,
$\rho (f_\lambda (x), f(x)) < \varepsilon$.
\end{defn}

To see the difference between strong uniform convergence and very strong uniform convergence
for continuous functions, let $f_n$ be the piecewise linear function on $[0,1]$ whose graph
joins $(0,0)$ to $(\frac{1}{2n},1)$ to $(\frac{1}{n},0)$ to $(1,0)$, let $f$ be the zero
function on $[0,1]$, and let $A = \{0\}$. Then, it is readily checked that
$\langle f_n \rangle$ is strongly uniformly convergent to $f$ on $A$, but $\langle f_n
\rangle$ is not very strongly uniformly convergent to $f$ on $A$.

While strong uniform convergence on each singleton of a pointwise convergent net of continuous
functions, being equivalent to continuity of the limit,  cannot ensure uniform convergence on
compact subsets, we have the following result whose easy proof is left to the reader.

\begin{prop}
Let $\langle X,d \rangle$ and $\langle Y, \rho \rangle$ be metric spaces and let $\langle
f_\lambda \rangle_{\lambda \in \Lambda}$ be a net in $Y^X$ very strongly uniformly convergent
to $f: X \rightarrow Y$ on each singleton subset of $X$.  Then very strong uniform convergence
on compact subsets to $f$ occurs.
\end{prop}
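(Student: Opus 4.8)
The plan is to show that very strong uniform convergence on each singleton, when combined with compactness, upgrades to very strong uniform convergence on the compact set via a finite-subcover argument. Let $K \subseteq X$ be compact and fix $\varepsilon > 0$. For each point $x \in K$, the singleton hypothesis gives me a $\lambda_x \in \Lambda$ and a $\delta_x > 0$ so that for all $\mu \succeq \lambda_x$ and all $z \in S_d(x, \delta_x)$, we have $\rho(f_\mu(z), f(z)) < \varepsilon$. The key observation is that the target estimate at $x$ controls behavior on a whole ball $S_d(x,\delta_x)$, not merely at $x$ itself — this is precisely the feature that distinguishes very strong uniform convergence from ordinary pointwise convergence and is what makes the enlargements of $K$ accessible.

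**Next I would** extract a finite subcover. The balls $\{S_d(x, \tfrac{1}{2}\delta_x) : x \in K\}$ form an open cover of the compact set $K$, so finitely many of them, say centered at $x_1, \dots, x_m$ with associated thresholds $\lambda_{x_1}, \dots, \lambda_{x_m}$ and radii $\delta_{x_1}, \dots, \delta_{x_m}$, already cover $K$. Since $\Lambda$ is directed, I can choose a single $\lambda_0 \succeq \lambda_{x_i}$ for every $i = 1, \dots, m$, and I set $\delta := \tfrac{1}{2}\min\{\delta_{x_1}, \dots, \delta_{x_m}\} > 0$.

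**Finally I would** verify that this $\lambda_0$ and $\delta$ witness very strong uniform convergence on $K$. Take any $z \in S_d(K, \delta)$; then $d(z, x) < \delta$ for some $x \in K$, and that $x$ lies in some $S_d(x_i, \tfrac{1}{2}\delta_{x_i})$. The triangle inequality gives
\[
d(z, x_i) \leq d(z, x) + d(x, x_i) < \delta + \tfrac{1}{2}\delta_{x_i} \leq \tfrac{1}{2}\delta_{x_i} + \tfrac{1}{2}\delta_{x_i} = \delta_{x_i},
\]
so $z \in S_d(x_i, \delta_{x_i})$. Hence for every $\mu \succeq \lambda_0 \succeq \lambda_{x_i}$ we obtain $\rho(f_\mu(z), f(z)) < \varepsilon$, which is exactly the required estimate uniformly over $S_d(K,\delta)$.

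**I expect the main obstacle** to be purely bookkeeping rather than conceptual: one must track the thresholds $\lambda_{x_i}$ carefully and exploit directedness of $\Lambda$ to merge them into a single $\lambda_0$, and one must halve the radii in the cover so that the triangle-inequality estimate closes. The essential idea — that compactness converts a local (singleton-indexed) family of ball estimates into a single global estimate on an enlargement — is standard, and indeed this is the same mechanism underlying the classical proof that a continuous function on a compact set is uniformly continuous, which is why the authors deem the proof routine.
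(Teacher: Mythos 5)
Your proof is correct and is precisely the routine finite-subcover argument the authors had in mind when they left the proof to the reader: cover $K$ by the half-radius balls from the singleton hypothesis, use directedness of $\Lambda$ to dominate the finitely many thresholds, and let $\delta$ be half the minimum radius so that the triangle inequality places every point of $S_d(K,\delta)$ inside one of the full balls. The bookkeeping with the halved radii and the merged index $\lambda_0$ is exactly right, so nothing further is needed.
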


\begin{thm}
Let $\langle X,d \rangle$ and $\langle Y, \rho \rangle$ be two metric spaces.  Let $\langle
f_\lambda \rangle_{\lambda \in \Lambda}$ be a net in $Y^X$ and $\langle A_\lambda
\rangle_{\lambda \in \Lambda}$ be a net of nonempty subsets of $X$, where $\Lambda$ is a
directed set.  Suppose that $A \in \mathscr{P}_0(X)$ and $f : X \rightarrow Y$ satisfy
\begin{enumerate}
\item[{\rm (i)}] for all $n \in \mathbb{N}, A_\lambda \subseteq S_d\left(A,\frac{1}{n}\right)$
eventually; and
\item[{\rm (ii)}] $\langle f_\lambda \rangle_{\lambda \in \Lambda}$ is very strongly
uniformly convergent to $f \in Y^X$ on $A$.
\end{enumerate}
Then
\[
\Omega (f,A) \geq \limsup_{\lambda \in \Lambda} \ \Omega (f_\lambda,A_\lambda).
\]
\end{thm}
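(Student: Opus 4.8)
The plan is to establish the equivalent assertion that $\limsup_{\lambda \in \Lambda} \Omega(f_\lambda, A_\lambda) \le \Omega(f,A)$. There is nothing to prove when $\Omega(f,A) = \infty$, so I would assume $\Omega(f,A) < \infty$ and fix an arbitrary real number $\alpha > \Omega(f,A)$; it then suffices to show that $\Omega(f_\lambda, A_\lambda) \le \alpha$ holds eventually, because this forces $\limsup_{\lambda} \Omega(f_\lambda, A_\lambda) \le \alpha$, and letting $\alpha$ decrease to $\Omega(f,A)$ finishes the argument.

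The engine of the proof is a single triangle-inequality split: for points $x, w$ I would write
\[
\rho(f_\lambda(x), f_\lambda(w)) \le \rho(f_\lambda(x), f(x)) + \rho(f(x), f(w)) + \rho(f(w), f_\lambda(w)),
\]
controlling the two outer terms through very strong uniform convergence (hypothesis (ii)) and the middle term through the oscillation of the limit $f$ at $A$. Concretely, I would interpose $\beta$ with $\Omega(f,A) < \beta < \alpha$ and use the representation $\Omega(f,A) = \inf_{n} \Omega_n(f,A)$ to select $N \in \mathbb{N}$ with $\Omega_N(f,A) < \beta$. Setting $\varepsilon = (\alpha - \beta)/2 > 0$, hypothesis (ii) then supplies $\lambda_0 \in \Lambda$ and $\delta > 0$ such that $\rho(f_\lambda(z), f(z)) < \varepsilon$ for all $\lambda \succeq \lambda_0$ and all $z \in S_d(A, \delta)$.

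The main technical point is to choose the ball radius $\tfrac{1}{M}$ defining $\Omega_M(f_\lambda, A_\lambda)$ small enough that every admissible pair of points is caught simultaneously by both estimates. I would pick an integer $M$ with $\tfrac{2}{M} < \min\{\delta, \tfrac{1}{N}\}$ and invoke hypothesis (i) to guarantee that $A_\lambda \subseteq S_d(A, \tfrac{1}{M})$ holds eventually. For $\lambda \succeq \lambda_0$ that also satisfy this containment, the nesting of enlargements $S_d(S_d(A, \tfrac{1}{M}), \tfrac{1}{M}) \subseteq S_d(A, \tfrac{2}{M})$ shows that every $x \in S_d(A_\lambda, \tfrac{1}{M})$ lies in $S_d(A, \tfrac{2}{M})$, hence in both $S_d(A, \delta)$ and $S_d(A, \tfrac{1}{N})$. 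When moreover $d(x,w) < \tfrac{1}{M} < \tfrac{1}{N}$, the middle term is at most $\Omega_N(f,A) < \beta$ while each outer term is below $\varepsilon$, so $\rho(f_\lambda(x), f_\lambda(w)) < \beta + 2\varepsilon = \alpha$. Taking the supremum over all admissible pairs gives $\Omega(f_\lambda, A_\lambda) \le \Omega_M(f_\lambda, A_\lambda) \le \alpha$, as required.

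The step I expect to require the most care is the bookkeeping that coordinates $N$, $\delta$, and $M$ against the two ``eventually'' clauses coming from (i) and (ii): one must produce a single tail of $\Lambda$ on which the containment $A_\lambda \subseteq S_d(A, \tfrac{1}{M})$ and the convergence estimate both hold. This is routine once observed, since for the two threshold indices furnished by (i) and (ii) the directedness of $\Lambda$ yields a common upper bound beyond which both conditions are in force simultaneously; everything else is an exercise in chaining the enlargement inclusions and the triangle inequality.
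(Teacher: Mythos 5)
Your proposal is correct and follows essentially the same route as the paper's proof: the identical three-term triangle-inequality split, with the outer terms controlled by very strong uniform convergence and the middle term by $\Omega_N(f,A)$, after nesting the enlargements $S_d(A_\lambda,\tfrac{1}{M})\subseteq S_d(A,\tfrac{2}{M})\subseteq S_d(A,\delta)\cap S_d(A,\tfrac{1}{N})$ exactly as the paper does with $n$ and $2n$. The only difference is cosmetic bookkeeping ($\beta+2\varepsilon=\alpha$ versus the paper's $\Omega(f,A)+3\varepsilon<\beta$), and your version is if anything slightly more explicit about reconciling the radius with the $\delta$ furnished by hypothesis (ii).
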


\begin{proof}
We may assume that $\Omega (f,A)$ is finite.  Let $\beta > \Omega (f,A)$ be arbitrary and
choose $\varepsilon > 0$ such that $\beta > \Omega (f,A) + 3\varepsilon.$ Choose $n \in
\mathbb{N}$ and $\lambda_0 \in \Lambda$ so large that
\[
\Omega_n (f,A) < \Omega (f,A) + \varepsilon
\]
and such that whenever $\lambda \succeq \lambda_0$ we have both
\begin {enumerate}
\item[(4.1)] $\sup \left\{\rho(f(x),f_\lambda(x)) : x \in S_d\left(A,\frac{1}{n}\right)
\right\} < \varepsilon$;
\item[(4.2)] $A_\lambda \subseteq S_d\left(A,\frac{1}{2n}\right)$.
\end {enumerate}

For $\lambda \succeq \lambda_0$, by (4.2) we have $S_d\left(A_\lambda,\frac{1}{2n}\right) \subseteq
S_d\left(A, \frac{1}{n}\right)$, and so whenever $\{x,w\} \subseteq S_d\left(A_\lambda,
\frac{1}{2n}\right)$ and $d(x,w) < \frac{1}{2n}$, we get
\begin{eqnarray*}
\rho(f_\lambda(x), f_\lambda(w)) &\leq& \rho(f_\lambda(x), f(x)) + \rho (f(x),f(w)) + \rho
(f(w),f_\lambda(w))\\
&<& \varepsilon + \Omega_n (f,A) + \varepsilon < \Omega (f,A) + 3\varepsilon.
\end{eqnarray*}
From this, it follows that whenever $\lambda \succeq \lambda_0$,
\[
\Omega (f_\lambda,A_\lambda) \leq \Omega_{2n} (f_\lambda,A_\lambda) \leq \Omega (f,A) +
3\varepsilon < \beta
\]
so that
\[
\Omega (f,A) \geq \textrm{lim sup}_{\lambda \in \Lambda} \ \Omega (f_\lambda,A_\lambda),
\]
and the proof is complete.
\end{proof}

\begin{thm}
Let $X$ be a metrizable space and $\langle Y, \rho \rangle$ be a metric space. Let $(\Lambda,
\succeq)$ be a directed set and let $\langle f_\lambda \rangle_{\lambda \in \Lambda}$ be a net
in $C(X,Y)$ and let $\langle A_\lambda \rangle_{\lambda \in \Lambda}$ be a net in ${\mathscr
P}_0(X)$. Suppose that $\langle A_\lambda \rangle_{\lambda \in \Lambda}$ is convergent in the
locally finite topology $\tau_{locfin}$ to $A \in \mathscr{P}_0(X)$, and suppose $\langle
f_\lambda \rangle_{\lambda \in \Lambda}$ is very strongly uniformly convergent to $f \in
C(X,Y)$ on $A$. Then for each metric $d$ compatible with the topology of $X$,
\[
\Omega (f,A) = \lim_{\lambda \in \Lambda} \ \Omega (f_\lambda,A_\lambda),
\]
where oscillations are computed with respect to $d$ and $\rho$.
\end{thm}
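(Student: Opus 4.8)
The plan is to squeeze the net $\langle \Omega(f_\lambda, A_\lambda)\rangle$ between two one-sided bounds, namely $\limsup_{\lambda} \Omega(f_\lambda, A_\lambda) \le \Omega(f,A)$ and $\liminf_{\lambda} \Omega(f_\lambda, A_\lambda) \ge \Omega(f,A)$. Since $\liminf \le \limsup$ always holds, these two estimates force the net to converge with limit exactly $\Omega(f,A)$, which is the assertion. The two estimates amount to a joint upper- and lower-semicontinuity statement, and I would establish them by quite different means.

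For the upper bound I would appeal directly to the preceding theorem, reducing the work to checking its two hypotheses. Hypothesis (ii) there is verbatim our assumption that $\langle f_\lambda\rangle$ is very strongly uniformly convergent to $f$ on $A$. For hypothesis (i), I would use that $\tau_{locfin}$ refines $\tau_{fin}$, which contains every set $V^+$ with $V$ open: for each fixed $n$ the enlargement $S_d(A,\frac{1}{n})$ is open and contains $A$, so $A \in S_d(A,\frac{1}{n})^+$, and $\tau_{locfin}$-convergence gives $A_\lambda \in S_d(A,\frac{1}{n})^+$, i.e. $A_\lambda \subseteq S_d(A,\frac{1}{n})$, eventually. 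This is precisely hypothesis (i), so the preceding theorem supplies $\Omega(f,A) \ge \limsup_\lambda \Omega(f_\lambda, A_\lambda)$.

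The lower bound is the substantive half, and I expect the coordination of the two convergence modes to be the main obstacle. Fix a real $\alpha < \Omega(f,A)$ (the case $\Omega(f,A)=0$ is vacuous) and interpolate $\alpha < \gamma < \beta < \Omega(f,A)$; it suffices to show $\Omega(f_\lambda, A_\lambda) \ge \gamma$ eventually. I would first reuse, essentially verbatim, the construction inside the proof of Theorem \ref{lf} with threshold $\beta$: it produces distinct points $\langle a_k\rangle$ in $A$ and a strictly increasing sequence of even integers $\langle n_k\rangle$ for which the family $\mathscr{V} = \{S_d(a_k,\frac{1}{n_{k+1}}) : k \in \mathbb{N}\}$ is locally finite, satisfies $A \in \mathscr{V}^-$, and obeys conditions (3.3)--(3.5). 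As $\mathscr{V}^-$ is then a $\tau_{locfin}$-neighbourhood of $A$, set convergence yields an index $\lambda_1$ with $A_\lambda \in \mathscr{V}^-$ for all $\lambda \succeq \lambda_1$; that is, $A_\lambda$ meets every ball of $\mathscr{V}$.

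It remains to transfer the diameter bound (3.3) from $f$ to the functions $f_\lambda$, and this is exactly where very strong (rather than merely strong) uniform convergence is essential, since it furnishes a single $\delta > 0$ valid simultaneously for all sufficiently advanced $\lambda$. Applying it with $\varepsilon := \tfrac{1}{2}(\beta - \gamma)$ gives such a $\delta$ and an index $\lambda_2$ with $\rho(f_\lambda(x), f(x)) < \varepsilon$ for all $\lambda \succeq \lambda_2$ and all $x \in S_d(A,\delta)$. Fix $\lambda \succeq \lambda_1,\lambda_2$ and any $m \in \mathbb{N}$. I would choose $j$ so large that both $\frac{2}{n_j} < \frac{1}{m}$ and $\frac{1}{n_j} < \delta$ (possible as $n_j \to \infty$), so that $S_d(a_j,\frac{1}{n_j}) \subseteq S_d(A,\delta)$, and then pick $b \in A_\lambda \cap S_d(a_j,\frac{1}{n_{j+1}})$. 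Condition (3.5) gives $f_\lambda(S_d(b,\frac{1}{m})) \supseteq f_\lambda(S_d(a_j,\frac{1}{n_j}))$, while the $\varepsilon$-closeness of $f_\lambda$ to $f$ on $S_d(a_j,\frac{1}{n_j})$ together with (3.3) yields $\textrm{diam}_\rho f_\lambda(S_d(a_j,\frac{1}{n_j})) \ge \textrm{diam}_\rho f(S_d(a_j,\frac{1}{n_j})) - 2\varepsilon > \beta - 2\varepsilon = \gamma$. Hence $\omega_m(f_\lambda, b) > \gamma$, so $\sup_{a \in A_\lambda} \omega_m(f_\lambda, a) > \gamma$ for every $m$, and Proposition \ref{alt} gives $\Omega(f_\lambda, A_\lambda) \ge \gamma$. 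Letting $\alpha \uparrow \Omega(f,A)$ completes the lower bound and hence the theorem. The delicate point throughout is keeping the transfer balls inside the fixed enlargement $S_d(A,\delta)$ on which one $\delta$ controls all advanced $f_\lambda$ at once — which is why strong uniform convergence alone would not close the argument.
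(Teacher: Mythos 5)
Your proposal is correct and follows essentially the same route as the paper's proof: the upper bound by invoking the preceding theorem after checking its hypothesis (i) via $S_d\left(A,\frac{1}{n}\right)^+ \in \tau_{locfin}$, and the lower bound by reusing the locally finite family $\mathscr{V}$ from the proof of Theorem \ref{lf} together with the single $\delta$ furnished by very strong uniform convergence to transfer the estimate from $f$ to $f_\lambda$ at a cost of $2\varepsilon$. The only immaterial difference is that the paper performs this transfer at the level of $\Omega_n(f,A_\lambda)$, using the additional containment $A_\lambda \in S_d\left(A,\frac{\delta}{2}\right)^+$, whereas you transfer it on the individual balls $S_d\left(a_j,\frac{1}{n_j}\right)$, which sit inside $S_d(A,\delta)$ automatically.
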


\begin{proof}
Since containment in $S_d(A,\delta)$ means membership to $S_d(A,\delta)^+$ and $S_d(A,\delta)^+$
belongs to the locally finite topology $\tau_{locfin}$, the last result gives
\[
\textrm{lim sup}_{\lambda \in \Lambda} \ \Omega (f_\lambda,A_\lambda) \leq \Omega (f,A).
\]  It remains to show that
\[
\Omega (f,A) \leq \textrm{lim inf}_{\lambda \in \Lambda} \ \Omega (f_\lambda,A_\lambda).
\]

We may assume that $\Omega (f,A) \neq 0$.  Let $0 < \alpha < \Omega (f,A)$  be arbitrary, and
choose $\varepsilon > 0$ with $\alpha + 3\varepsilon < \Omega (f,A)$.  The proof of Theorem
\ref {lf} shows that there exists a locally finite family $\mathscr {V}$ of nonempty open subsets
of $X$ (in fact open balls) such that $A \in \mathscr {V}^-$ and if $B \in \mathscr {V}^-$, then
$\Omega (f,B) > \alpha + 3\varepsilon.$  By very strong uniform convergence on $A$, choose
$\delta > 0$ and $\lambda_1 \in \Lambda$ such that whenever $\lambda \succeq \lambda_1$,
\[
\textrm{sup} \ \{\rho(f(x),f_\lambda(x)) : x \in S_d(A,\delta) \} < \varepsilon.
\]
There exists $\lambda_2 \succeq \lambda_1$ such that
\[
A_\lambda \in \mathscr {V}^- \cap S_d\left(A, \frac{\delta}{2}\right)^+
\]
for all $\lambda \succeq \lambda_2$.
Let $n$ be any positive integer such that $\frac{1}{n} <\frac{\delta}{2}$.  Whenever $\{x,w\}
\subseteq S_d\left(A_\lambda,\frac{1}{n}\right)$ and $d(x,w) < \frac{1}{n}$, we have  $\{x,w\}
\subseteq S_d(A,\delta)$, and the triangle inequality yields for all $\lambda \succeq
\lambda_2$,
\[
\rho (f_\lambda(x),f_\lambda(w)) > \rho(f(x),f(w)) - 2\varepsilon.
\]
It follows that for all $n$ sufficiently large, we get
\[
\Omega_n(f_\lambda,A_\lambda) \geq \Omega_n(f,A_\lambda) - 2\varepsilon  \geq
\Omega(f,A_\lambda) - 2\varepsilon > \alpha + \varepsilon,
\]
and so
\[\Omega(f_\lambda,A_\lambda) \geq \alpha + \varepsilon > \alpha
\]
for all $\lambda \succeq \lambda_2$.  We may now conclude that
\[
\Omega (f,A) \leq \noindent \liminf_{\lambda \in \Lambda} \ \Omega (f_\lambda,A_\lambda),
\]
which completes the proof.
\end{proof}

\begin{cor}
Let $X$ be a metrizable space and $\langle Y, \rho \rangle$ be a metric space. Suppose $C(X,Y)$ is
equipped with the topology of $\rho$-uniform convergence, and $\mathscr {P}_0(X)$ is equipped with
the locally finite topology. Then for each metric $d$ compatible with the topology of $X, \ (f,A)
\rightarrow \Omega (f,A)$   is continuous on $C(X,Y) \times \mathscr {P}_0(X)$ equipped with the
product topology, where oscillations are computed with respect to $d$ and $\rho$.
\end {cor}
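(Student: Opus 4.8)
The plan is to deduce the corollary directly from the preceding joint continuity theorem by verifying that its hypotheses are automatically met along any convergent net in the product topology. Since the target space $[-\infty,\infty]$ carries its usual topology, it suffices to establish net continuity of $(f,A) \mapsto \Omega(f,A)$. Accordingly, I would fix a compatible metric $d$, take an arbitrary net $\langle (f_\lambda, A_\lambda) \rangle_{\lambda \in \Lambda}$ in $C(X,Y) \times \mathscr{P}_0(X)$ convergent to a point $(f,A)$ of that product, and aim to show $\lim_{\lambda \in \Lambda} \Omega(f_\lambda, A_\lambda) = \Omega(f,A)$.

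Convergence in the product topology unpacks into two assertions: $\langle A_\lambda \rangle_{\lambda \in \Lambda}$ is $\tau_{locfin}$-convergent to $A$, and $\langle f_\lambda \rangle_{\lambda \in \Lambda}$ converges to $f$ in the topology of $\rho$-uniform convergence. The first is precisely the hypothesis on the sets required by the theorem. For the second, the key observation---and really the only point requiring any argument---is that $\rho$-uniform convergence on all of $X$ implies very strong uniform convergence to $f$ on $A$ (indeed on every nonempty subset). This is immediate from the definition: given $\varepsilon > 0$, uniform convergence produces $\lambda_0 \in \Lambda$ with $\rho(f_\lambda(x), f(x)) < \varepsilon$ for all $\lambda \succeq \lambda_0$ and all $x \in X$; fixing any $\delta > 0$, say $\delta = 1$, this inequality holds a fortiori for all $x \in S_d(A,\delta)$, which is exactly the condition defining very strong uniform convergence on $A$. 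Observe that here $\delta$ need not even depend on $\lambda$, reflecting that uniform convergence is considerably stronger than what the theorem demands on the enlargements $S_d(A,\delta)$.

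With both hypotheses verified, and with $f \in C(X,Y)$ and each $f_\lambda \in C(X,Y)$ holding because we work inside the ambient space $C(X,Y)$, the joint continuity theorem applies verbatim and yields $\Omega(f,A) = \lim_{\lambda \in \Lambda} \Omega(f_\lambda, A_\lambda)$. Since the net was arbitrary, this is net continuity of the oscillation map on the product, and hence continuity.

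I expect no genuine obstacle here: all the substantive work has already been carried out in the two preceding theorems---the upper-semicontinuity estimate driven by very strong uniform convergence, and the $\tau_{locfin}$ lower-bound construction of a locally finite family $\mathscr{V}$ of balls. The corollary is essentially a specialization in which the function-space convergence is strengthened to ordinary $\rho$-uniform convergence, a mode that trivially subsumes the very strong uniform convergence hypothesis; the one thing worth stating explicitly is this trivial subsumption, so that the reader sees why the product topology over $C(X,Y)$ with the uniform topology is the right ambient setting.
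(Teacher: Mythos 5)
Your proof is correct and matches the paper's intent: the corollary is stated without proof precisely because, as you observe, $\rho$-uniform convergence on all of $X$ trivially yields very strong uniform convergence on $A$ (any fixed $\delta$ works), so the preceding theorem applies verbatim to any convergent net in the product. Nothing is missing.
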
 \vspace {2 mm}

If we have very strong uniform convergence of a net of functions on each member of some family of
nonempty subsets $\mathscr {A}$ of $\langle X,d \rangle$ then we evidently have very strong uniform
convergence on subsets of members of  $\mathscr {A}$ and on finite unions of members of $\mathscr {A}$.
Thus, if $\mathscr {A}$ forms a cover of $X$, there is no loss of generality in assuming that $\mathscr{A}$
is a bornology.

Let $\langle X,d \rangle$ and $\langle Y,\rho \rangle$ be metric spaces, and let $\mathscr {B}$ be a
bornology  on $X$. We say that a net of functions $\langle f_\lambda \rangle_{\lambda \in \Lambda}$
from $X$ to $Y$ is \textit{very strongly uniformly convergent to} $f:X \rightarrow Y$ \textit{on}
$\mathscr {B}$ if it is very strongly uniformly convergent to $f$ on every member $B$ of $\mathscr B$.
This usage parallels the notion of strong uniform convergence in $Y^X$ on $\mathscr {B}$ where the
convergence is always compatible with a uniformizable topology \cite {BL1}.  It came as a surprise
to the authors that very strong uniform convergence need not be topological even if we restrict our
attention to $C(X,\mathbb{R})$ and our bornology is the bornology of nonempty finite subsets $\mathscr
{F}_0(X)$, which corresponds to very strong uniform convergence on each singleton subset.  We show
that the iterated limit condition \cite [p. 30] {KT} which is necessary for the convergence to be
topological can fail for a sequence of sequences of real-valued continuous functions. \vspace {2 mm}

\begin {ex}  Our base metric space $\langle X,d \rangle$ is the sequence space $\ell_\infty$, in
which for each $n \in \mathbb{N}, \ e_n$ is the sequence whose $n$th term is one and whose other
terms are all zero.  For our $k$th sequence in $C(\ell_\infty,\mathbb{R})$ we put
\[
f_{k,n}(x) = \begin{cases}
                 1 - {3^k}d(x,\frac{1}{k}e_n), & \textrm{if} \ d(x,\frac{1}{k}e_n) < \frac{1}{3^k}; \\
                 0, & \textrm{otherwise}.
             \end {cases}
\]
The sequence $f_{k,1}, f_{k,2},\ldots, f_{k,n}, \ldots$ converges very strongly uniformly at
each point of $\ell_\infty$ to the zero function as it is eventually zero in a fixed
neighborhood of each point that does not depend on $n$.  However, if we direct $\mathbb{N}
\times \mathbb{N}^\mathbb{N}$ with the pointwise order, the net $(k,\phi) \mapsto f_{k,\phi(k)}$ evidently fails to converge very strongly uniformly at the origin of $\ell_\infty$ to the zero
function.
\end {ex}

\section {A New Characteriation of UC Subsets}

In the introduction, we described sequentially those nonempty subsets $A$ of a metric space $\langle X,d
\rangle$ on which each continuous function on $X$ has oscillation zero at $A$; for a very different
sequential description, the reader may consult \cite [Theorem 3.5] {BDM}.  Such subsets, called UC-subsets,
can also be described in terms of gaps, where the \textit{gap} between two nonempty subsets of $X$ is the
infimum of the distances between pairs of points one in each set, see \cite {Be}. We call two nonintersecting
sets \textit{asymptotic} if the gap between them is zero.

For notational economy and following \cite {BL1}, we now write $I(x)$ for $d(x,X \backslash \{x\})$; the
functional $I(\cdot)$ measures the isolation of points of $X$ and $I(x) = 0$ means that $x$ is a limit
point of $X$.

\begin{thm}
Let $A$ be a nonempty subset of $\langle X,d \rangle$.  The following conditions are equivalent:
\begin{enumerate}
\item[{\rm (i)}]$A$ is a UC-subset;

\item[{\rm (ii)}]  for each $f \in C(X,\mathbb{R}), \ \Omega (f,A)$ is finite;

\item[{\rm (iii)}] whenever $C$ and $E$ are nonempty closed subsets of $X$ with $C\subseteq
A$ and $C \cap E = \emptyset$, then the gap between $C$ and $E$ is positive.
\end{enumerate}
\end{thm}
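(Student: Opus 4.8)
The plan is to prove the cycle (i) $\Rightarrow$ (ii) $\Rightarrow$ (iii) $\Rightarrow$ (i). The implication (i) $\Rightarrow$ (ii) is immediate from the characterization of UC-subsets recalled in Section~\ref{sec: Intro}: by \cite[Theorem 5.2]{BL1}, if $A$ is a UC-subset then $\Omega(f,A)=0$ for every $f \in C(X,\mathbb{R})$, and in particular $\Omega(f,A)$ is finite. The two substantial implications are the contrapositives of (ii) $\Rightarrow$ (iii) and (iii) $\Rightarrow$ (i), and both rest on the same elementary observation: if $C$ and $E$ are disjoint closed sets and $c_n \in C$, $e_n \in E$ satisfy $d(c_n,e_n)\to 0$, then neither $\langle c_n\rangle$ nor $\langle e_n\rangle$ can cluster, since any cluster point would lie in $C\cap E=\emptyset$. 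Thus such witnessing sequences enumerate closed discrete sets, and every subset of a closed discrete set is closed in $X$.

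For (ii) $\Rightarrow$ (iii) I would argue the contrapositive. Assume (iii) fails, so there are nonempty closed sets $C\subseteq A$ and $E$ with $C\cap E=\emptyset$ and gap zero; pick $c_n\in C$, $e_n\in E$ with $d(c_n,e_n)\to 0$. By the observation above $\{e_n\}$ is closed and discrete, and since $E\cap C=\emptyset$ we have $d(e_n,C)>0$ for each $n$. I would then choose radii $r_n\leq\min\{1/n,\tfrac12 d(e_n,C),\tfrac12 d(e_n,\{e_m:m\neq n\})\}$, so that the balls $S_d(e_n,r_n)$ are pairwise disjoint, miss $C$, and form a locally finite family. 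Setting $f=\sum_n n\cdot\max\{0,1-d(\cdot,e_n)/r_n\}$ produces a member of $C(X,\mathbb{R})$ (a locally finite sum of continuous bumps) with $f\equiv 0$ on $C$ and $f(e_n)=n$. Since $c_n\in A$ and $e_n\in S_d(c_n,1/m)$ for all large $n$, we get $\omega_m(f,c_n)\geq n\to\infty$ for every fixed $m$, whence $\Omega(f,A)=\infty$ by Proposition~\ref{alt}, contradicting (ii).

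For (iii) $\Rightarrow$ (i), again by contraposition, suppose $A$ is not a UC-subset, so there is a sequence $\langle a_n\rangle$ in $A$ with $I(a_n)\to 0$ that does not cluster; after passing to a subsequence the $a_n$ are distinct (an infinitely repeated value would itself be a cluster point), and $\{a_n\}$ is closed and discrete. Take $C:=\{a_n\}\subseteq A$ and, for each $n$, a point $x_n\neq a_n$ with $d(a_n,x_n)\to 0$. If infinitely many $x_n$ lie outside $C$, then, noting that $\langle x_n\rangle$ does not cluster either (as $d(a_n,x_n)\to 0$), the set $E:=\{x_n:x_n\notin C\}$ is closed and disjoint from $C$, and $C,E$ have gap zero, violating (iii). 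Otherwise $x_n=a_{m(n)}\in C$ eventually, yielding distinct close pairs $(a_n,a_{m(n)})$ inside $C$; here I would extract an infinite \emph{matching}. The key point is that non-clustering forces each $a_j$ to lie in only finitely many such pairs (if $m(n)=j$ for infinitely many $n$, then $a_n\to a_j$, a contradiction), so a greedy induction selects pairwise disjoint pairs $(p_\ell,q_\ell)$ with $d(p_\ell,q_\ell)\to 0$; then $C':=\{p_\ell\}\subseteq A$ and $E':=\{q_\ell\}$ are disjoint closed sets of gap zero, again violating (iii).

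I expect the main obstacle to be this final matching step: when every nearby witness $x_n$ is forced back into $\{a_n\}$ (as happens, for instance, when $A$ is closed), one must split a single closed discrete set into two disjoint closed pieces that remain asymptotic. The crux is verifying that the pairing map $n\mapsto m(n)$ is finite-to-one at each target, which is precisely what the non-clustering hypothesis supplies and which reduces the selection of disjoint pairs to a routine greedy argument; everything else reduces to the closed-discreteness principle isolated in the first paragraph.
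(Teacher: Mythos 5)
Your proof is correct, but on both substantial implications it takes a genuinely different route from the paper. For (ii) $\Rightarrow$ (iii), the paper invokes the Tietze extension theorem to produce $f$ vanishing on $C$ with $f(e)=d(e,C)^{-1}$ on $E$, so that $\Omega_n(f,A)>n$; you instead build $f$ by hand as a locally finite sum of bumps of height $n$ centered at the $e_n$. The two constructions do the same job; yours is self-contained (no appeal to Tietze) at the cost of having to verify local finiteness of the supports, which your condition $r_n\le 1/n$ together with the closed discreteness of $\{e_n\}$ does secure. One small point to tidy: for $r_n\le \tfrac12 d\left(e_n,\{e_m: m\neq n\}\right)$ to be positive you need the $e_n$ to be distinct, which is not automatic from the choice $d(c_n,e_n)<1/n$; pass to a subsequence first (no value of a non-clustering sequence can repeat infinitely often). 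For (iii) $\Rightarrow$ (i), the paper's case split is by the nature of the points $a_n$ in $X$ --- all limit points (where disjoint balls give witnesses off $C$ for free) versus all isolated points (where a delicate induction drives each new distance below the isolation measures $I(\cdot)$ of all previously chosen points to keep the two term sets disjoint). Your split is instead by where the witnesses $x_n$ land --- outside $C$ infinitely often, or back inside $C$ eventually --- and you resolve the second case by observing that the induced pairing $n\mapsto m(n)$ is finite-to-one at each target (else $a_{m(n)}$ would be a cluster point) and greedily extracting pairwise disjoint pairs. This treats the paper's two cases uniformly, avoids the isolation-functional bookkeeping entirely, and is arguably the cleaner argument; the paper's version, in exchange, produces the asymptotic pair $C, E$ directly without any combinatorial extraction step.
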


\begin{proof}  (i) $\Rightarrow$ (ii) trivially follows from \cite [Theorem 5.2] {BL1}.

For (ii) $\Rightarrow$ (iii), if (iii) fails for some $C$ and $E$, by the Tietze extension theorem, we can
find $f \in C(X,\mathbb{R})$ mapping each point of $C$ to zero and such that for each $e \in E, f(e) =
d(e,C)^{-1}$.   Now for each $n \in \mathbb{N}$, there exists $c_n \in C$ and $e_n \in E$ with $d(c_n,e_n)
< \frac{1}{n}$ and of course  $\{c_n,e_n\} \subseteq S_d\left(A,\frac{1}{n}\right)$.  As a result
$\Omega_n(f,A) > n$ and so $\Omega (f,A) = \infty$.

Only (iii) $\Rightarrow$ (i) remains. Suppose (i) fails; then we can find a sequence $\langle a_n \rangle$
in $A$ that fails to cluster but for which $\textrm{lim}_{n\rightarrow\infty} \ I(a_n) = 0$.  By passing
to a subsequence, we may assume all terms are distinct and either (a) all terms are limit points of $X$,
or (b) all terms are isolated points of $X$.  In case (a), choose a strictly increasing
sequence of positive integers $\langle k_n\rangle$ such that for each $n \in \mathbb{N}$,
\[
\frac{1}{k_n} < \frac{1}{3}\ d(a_n, \{a_j : j \neq n\});
\]
then $\left\{S_d\left(a_n,\frac{1}{k_n}\right): n \in \mathbb{N}\right\}$ is a pairwise disjoint
family of balls. Choose $e_n \in S_d\left(a_n,\frac{1}{k_n}\right)$ different from $a_n$, and put
$C := \{a_n : n \in \mathbb{N}\}$ and $E :=
\{e_n : n \in \mathbb{N}\}$. Since $\langle a_n \rangle$ can't cluster, neither can $\langle e_n \rangle$
as $\textrm{lim}_{n\rightarrow\infty} \ d(a_n, e_n) = 0$.  Thus, $C$ and $E$ are disjoint asymptotic
closed subsets of $X$ with $C \subseteq A$, which violates (iii).

Case (b) is a little more delicate, involving an iterative procedure.  Put $n_1 = 1$ and choose $x_{n_1}
\in X$ so that
\[
0 < d(a_{n_1},x_{n_1}) < I(a_{n_1}) + \frac{1}{n_1}.
\]
Having chosen $n_1 < n_2 < \cdots < n_k$ and $x_{n_1}, x_{n_2}, \ldots, x_{n_k}$ not necessarily distinct
in $X$ such that $\{a_{n_1}, a_{n_2}, \ldots, a_{n_k}\}$ and $\{x_{n_1}, x_{n_2}, \ldots, x_{n_k}\}$ form
disjoint sets, put
\[
\delta = \textrm{min} \ \{\{I(a_{n_j}) : j \leq k\}, \{I(x_{n_j}) : x_{n_j} \ \textrm{is an isolated point
of} \ X, j \le k \}\}.
\]
Since the measure of isolation functional goes to zero along the sequence $\langle a_n \rangle$, we can
find $n_{k+1} > n_k$ and then $x_{n_{k+1}} \in X$ with
\[
0 < d(a_{n_{k+1}},x_{n_{k+1}}) < I(a_{n_{k+1}}) + \frac{1}{n_{k +1}} < \delta.
\]
Since $I(x_{n_{k+1}}) < \delta$ as well, we obtain the disjointness of  $\{a_{n_1}, a_{n_2}, \ldots,
a_{n_{k+1}}\}$ and $\{x_{n_1}, x_{n_2}, \ldots, x_{n_{k+1}}\}$.

In this way we produce sequences $\langle a_{n_j} \rangle$ and $\langle x_{n_j} \rangle$ whose sets of
terms do not overlap and such that for each $j \in \mathbb{N}$,
\[
0 < d(a_{n_j},x_{n_j}) < I(a_{n_j}) + \frac{1}{n_j}.
\]
Since $\langle a_{n_j} \rangle$ can't cluster, neither can $\langle x_{n_j} \rangle$ as
$\textrm{lim}_{j\rightarrow\infty} \ d(a_{n_j},x_{n_j}) = 0$.  Thus, $C = \{a_{n_j} : j \in \mathbb{N}\}$
and $E = \{x_{n_j} : j \in \mathbb{N}\}$ are disjoint asymptotic closed subsets of $X$ with $C \subseteq
A$, once again violating condition (iii).
\end{proof}

\bigskip

\bibliographystyle{amsplain}

\begin{thebibliography}{10}

\bibitem {At} M.Atsuji, \textit{Uniform continuity of continuous functions of metric spaces},
Pacific J. Math. \textbf{8} (1958), 11-16.

\bibitem {Be} G. Beer, \textit{Topologies on closed and closed convex sets}, Kluwer Academic Publishers,
Dordrecht, Holland, 1993.

\bibitem {Be1} G. Beer, \textit{Wijsman convergence of convex sets under renorming}, Nonlinear Anal.
\textbf{22} (1994), 207-216.

\bibitem {BDM} G. Beer and G. Di Maio, \textit{The bornology of cofinally complete subsets}, Acta Math.
Hungar. \textbf{134} (2012), 322-343.

\bibitem {BG} G. Beer and M. I. Garrido, \textit {Bornologies and locally Lipschitz functions}, Bull.
Aust. Math. Soc. \textbf{90} (2014), 257-263.

\bibitem {BHPV} G. Beer, C. Himmelberg, C. Prikry and F. Van Vleck \textit {The locally finite topology
on $2^X$}, Proc. Amer. Math. Soc. \textbf{101} (1987), 168-172.

\bibitem {BLLN} G. Beer, A. Lechicki, S. Levi and S. Naimpally, \textit{Distance functionals and the
suprema of hyperspace topologies}, Ann. Math. Pura Appl. \textbf{162} (1992), 367-381.

\bibitem {BL1} G. Beer and S. Levi, \textit{Strong uniform continuity}, J. Math. Anal. Appl.
\textbf{350} (2009), 568-589.

\bibitem {BL2} G. Beer and S.Levi, \textit {Uniform continuity, uniform convergence, and shields},
Set-Valued Var. Anal. \textbf{18} (2010), 251-275.

\bibitem {BN} G. Beer and S. Naimpally, \textit{Graphical convergence of continuous functions}, Acta
Math. Hungar. \textbf {140} (2013), 305-315.

\bibitem {Bou} N. Bouleau, \textit{Une structure uniforme sur un espace F(E,F)}, Cahiers Topologie
G\'{e}om. Différentielle \textbf{11} (1969), 207-214.

\bibitem {Bo} N. Bourbaki, \textit{Elements of mathematics. General topology, Part 1}, Hermann,
Paris, 1966.


\bibitem {CJM}  J. Cao, H. Junnila, and W. Moors, \textit{Wijsman hyperspaces: subspaces and
embeddings}, Topology Appl. \textbf{159} (2012), 1620-1624.

\bibitem {CT} J. Cao and A. Tomita, \textit{Bornologies, topological games and function spaces},
Topology Appl. \textbf{184} (2015), 16-28.

\bibitem {CDH} A. Caserta, G. Di Maio and L. Hol\'{a}, \textit{Arzel\`{a}'s theorem and strongly
uniform convergence on bornologies}, J. Math. Anal. Appl.  \textbf{371} (2010), 384-392.

\bibitem {Co} C. Costantini, \textit{Every Wijsman topology relative to a Polish space is Polish},
Proc. Amer. Math. Soc. \textbf{123} (1995), 2569-2574.

\bibitem {EP} J. Ewert and S. Ponomarev, \textit{On the existence of $\omega$-primitives on arbitrary metric spaces}, Math. Slovaca \textbf{53} (2003), 51-57.
    

\bibitem {GM} M. I. Garrido and A. S. Mero\~{n}o, \textit{New types of completeness in metric
spaces}, Ann. Acad. Sci. Fenn. Math. \textbf{39} (2014), 733-758.

\bibitem {He} J. Hejcman, \textit{Boundedness in uniform spaces and topological groups},
Czechoslovak Math. J. \textbf{9} (1959), 544-563.

\bibitem {HS}  E. Hewitt and K. Stromberg, \textit{Real and abstract analysis}, Springer, New
York, 1965.

\bibitem {Ho} L. Hol\'{a}, \textit{Complete metrizability of topologies of strong uniform
convergence on bornologies}, J. Math. Anal. Appl. \textbf{387} (2012), 770-775.

\bibitem {JK} T. Jain and S. Kundu, \textit{Atsuji spaces: equivalent conditions} Topology
Proc. \textbf{30} (2006), 301-325.

\bibitem {KT} E. Klein and A. Thompson, \textit{Theory of correspondences}, Wiley, New York,
1984.

\bibitem {LL} A. Lechicki and S. Levi, \textit{Wijsman convergence in the hyperspace of a
metric space}, Boll. Un. Mat. Ital. B(7)  {\bf 1} (1987), 439-452.

\bibitem {Ma} M. Marjanovi\v{c}, \textit{Topologies on collections of closed subsets},  Publ.
Inst. Math. (Beograd) (N.S.) {\bf 6 (20)} (1966), 125-130.

\bibitem {MN} R. McCoy and E. Ntantu, \textit{Topological properties of spaces of continuous
functions}, Springer, New York, 1988.

\bibitem {Mi} E. Michael, \textit{Topologies on spaces of subsets}, Trans. Amer. Math. Soc.
\textbf {71} (1951), 152-182.

\bibitem {Vr} T. Vroegrijk, \textit {Uniformizable and realcompact bornological universes},
Appl. Gen. Topol. \textbf{10} (2009), 277-287.

\bibitem {Zs}  L. Zsilinszky, \textit{Polishness of the Wijsman topology revisited}, Proc.
Amer. Math. Soc. \textbf{126} (1998), 2575-2584.

\end{thebibliography}

\end{document}